\newcommand{\eps}[0]{\varepsilon}
\def\supp{{\mathop\mathrm{\,supp\,}}}
\newtheorem{thm}{Theorem}[section]
\newtheorem{prop}[thm]{Proposition}
\newcommand{\R}{\mathbb R}
\newcommand{\Sph}{\mathbb{S}}
\newcommand{\card}{\operatorname{card}}
\newcommand{\wh}{\widehat}
\newcommand{\Haus}{\mathcal{H}}
\numberwithin{equation}{section}
\begin{document}
\arraycolsep=1pt

\title[Mobile sampling]{A sharp sufficient condition for mobile sampling in terms of surface density}
\author{Benjamin Jaye}
\author{Mishko Mitkovski}
\author{Manasa N. Vempati}

\keywords{Mobile sampling, mean width, surface density}

\maketitle

\begin{abstract}We provide the sharp surface density threshold to guarantee mobile sampling in terms of the surface density of the set.
\end{abstract}

\section{Introduction}

 For a set $\Gamma\subset \R^d$ of locally finite $(d-1)$-dimensional Hausdorff $\mathcal{H}^{d-1}$-measure, the mobile sampling problem concerns whether there exists a constant $C>0$ such that
$$\|f\|_{L^2(\R^d)}^2\leq C\int_{\Gamma}|f|^2d\mathcal{H}^{d-1}$$
for every function in $L^2(\R^d)$ whose Fourier transform
$$\widehat{f}(\xi) = \int_{\R^d}f(x)e^{2\pi i \langle x,\xi\rangle}dm_d(x)
$$is supported in an origin symmetric convex set $K$.

This problem has been quite heavily studied in the last ten years, see e.g. \cite{AGR, AGH+, GRUV, JNR, RUZ} and references therein, following foundational work by Unnikrishnan and Vetterli~\cite{UV12, UV13}, who formulated the problem precisely and coined the term mobile sampling. These papers contain a number of precise results characterizing mobile sampling sets within a variety of special families of curves and surfaces. Many of these characterizations use the concept of surface density also introduced by Unnikrishnan and Vetterli as an analog of lower Beurling density that featured in classical results of Beurling and Kahane. The lower surface density $\mathbf{D}^{-}(\Gamma)$ of a $(d-1)-$dimensional surface $\Gamma\subset \mathbb{R}^d$ is defined by 

\begin{equation*}
    \mathbf{D}^{-}(\Gamma) = \liminf_{r\mapsto \infty}\inf_{x\in \mathbb{R}^d}\frac{\mathcal{H}^{d-1}(\Gamma\cap B(x,r))}{m_d(B(x,r))}.
\end{equation*}



The goal of \cite{JM} was to provide a general sufficient condition for mobile sampling in terms of the lower surface density of $\Gamma$ that is valid for a large class of surfaces, in the spirit or the well-known one-dimensional results of Beurling and Kahane~\cite{Beu, Kah}. In \cite{JM}, it was shown that there is a constant $A_d$ such that any "regular'' surface $\Gamma \subset \mathbb{R}^d$ with surface density $\mathbf{D}^{-}(\Gamma) > A_d \mathbf{W}(K)$, must be a mobile sampling set. Here $\mathbf{W}(K)$ denotes the mean width of the symmetric convex set $K$. 

In \cite{JM} it was shown that one can take $A_d = \frac{\omega_d}{\omega_{d-1}}\frac{3d^2}{2d+4}$ -- where $\omega_d$ is the volume of the $d$-dimensional unit ball -- and it was left as an open problem if this constant could be improved. In this paper resolve this issue and provide the sharp value of the constant $A_d = \frac{d}{2}\frac{\omega_d}{\omega_{d-1}}$.  All relevant definitions will be given in the next section.

\begin{thm}\label{Maintheo}
    Set 
    \begin{equation*}
        A_d = \frac{d}{2}\frac{\omega_d}{\omega_{d-1}}
    \end{equation*}

Suppose that $\Gamma$ is $\varphi-$regular and 
\begin{equation*}
     D^{-}(\Gamma)> \varphi(0)\cdot A_d\cdot\mathbf{W}(K).
\end{equation*}
For every $1\leq p\leq \infty$, there exists a constant $C>0$ such that 
\begin{equation}\label{sampling ineq}
 \bigg(\int_{\mathbb{R}^d}|f|^pdm_d\bigg)^{1/ p} \leq C\bigg(\int_{\Gamma}|f|^pd\mathcal{H}^{d-1}\bigg)^{1/ p} 
\end{equation}
for every $f\in L^p(\R^d)$ whose (distributional) Fourier transform is supported in $K$.

\end{thm}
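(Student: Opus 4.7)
The plan is to combine a duality/concentration-compactness argument with a slice-and-average uniqueness principle, extracting the sharp constant from two well-known integral identities on the sphere.

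First, I would argue by contradiction: assuming \eqref{sampling ineq} fails, one obtains a sequence $(f_n) \subset L^p(\R^d)$ with $\|f_n\|_{L^p(\R^d)} = 1$, $\widehat{f_n}$ supported in $K$, and $\int_\Gamma |f_n|^p\, d\mathcal{H}^{d-1} \to 0$. Via suitable translations, Paley--Wiener regularity, and a diagonal/normal-families selection against the $\varphi$-regular class (standard for this circle of problems), extract a nonzero subsequential weak$^\ast$ limit $f$ whose Fourier transform is still supported in $K$, which vanishes on a limit surface $\Gamma_\infty$ that remains $\varphi$-regular and inherits the lower density bound. It then suffices to prove the uniqueness statement: if $\widehat f \subset K$ and $f \equiv 0$ on a $\varphi$-regular surface $\Gamma$ with $D^{-}(\Gamma) > \varphi(0)\, A_d\, \mathbf{W}(K)$, then $f \equiv 0$.

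To prove this uniqueness, I would slice by lines in arbitrary directions. Fix $\theta \in \mathbb{S}^{d-1}$ and consider any line $L = y + \mathbb{R}\theta$ with $y \in \theta^\perp$. The restriction $f|_L$, viewed as a function of one variable, has Fourier support contained in $[-h_K(\theta), h_K(\theta)]$, where $h_K$ is the support function of $K$. The sharp one-dimensional Beurling--Kahane uniqueness theorem then yields $f|_L \equiv 0$ provided the lower Beurling density of $\Gamma \cap L$ on $L$ strictly exceeds $2\, h_K(\theta)$. The sharp constant then emerges by averaging against the uniform measure $\sigma$ on $\mathbb{S}^{d-1}$ using two classical identities: Cauchy's projection formula
\begin{equation*}
\int_{\mathbb{S}^{d-1}} |\langle \nu, \theta\rangle|\, d\sigma(\theta) = 2\omega_{d-1}, \qquad \nu \in \mathbb{S}^{d-1},
\end{equation*}
which, coupled with the co-area formula applied to $\pi_{\theta^\perp}|_\Gamma$ (whose Jacobian at $x \in \Gamma$ equals $|\langle \nu_x, \theta\rangle|$), converts the surface density $D^{-}(\Gamma)$ into an average line density in direction $\theta$; and the mean-width identity
\begin{equation*}
\int_{\mathbb{S}^{d-1}} h_K(\theta)\, d\sigma(\theta) = \tfrac{1}{2}\, d\, \omega_d\, \mathbf{W}(K).
\end{equation*}
Balancing these two $\sigma$-averages against each other produces exactly $A_d = \tfrac{d}{2}\tfrac{\omega_d}{\omega_{d-1}}$, which is optimal precisely because both identities are sharp.

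The main obstacle is that the one-dimensional Beurling--Kahane theorem requires the density lower bound to hold on \emph{each} line $L$, whereas the surface-density hypothesis only controls an average. The $\varphi$-regularity assumption should provide the needed pointwise-in-$L$ uniformity: it forces the 1D lower density of $\Gamma \cap L$ to be close to its expected value for a set of lines of full $(d-1)$-dimensional measure in $\theta^\perp$, with the factor $\varphi(0)$ absorbing the worst-case deviation; the strict inequality in the hypothesis provides the $\varepsilon$-room needed to tolerate this deviation. A secondary subtlety is extending the argument to the endpoints $p \in \{1, \infty\}$, which should be handled via Plancherel--P\'olya-type estimates for band-limited functions together with interpolation.
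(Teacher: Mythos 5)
Your high-level ingredients --- slicing by lines, one-dimensional Fourier support $[-h_K(\theta),h_K(\theta)]$ for the restriction $f|_L$, Cauchy's projection formula to convert surface measure into averaged line-intersection counts, and the mean-width identity --- are exactly the ones the paper uses, and they do produce the constant $A_d=\tfrac{d}{2}\tfrac{\omega_d}{\omega_{d-1}}$ when combined correctly. The compactness reduction (translate so that $|f_n|$ is near its sup at the origin, take a normal-families limit) is also in the same spirit as the paper's Proposition~\ref{compactness1}. So the architecture is recognizable; the problem is the engine inside.

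The fatal gap is the uniqueness step. You propose to apply the one-dimensional Beurling--Kahane theorem line by line, which requires the lower Beurling density of $\Gamma\cap L$ to exceed $2h_K(\theta)$ on each (or at least almost every) line $L$ in direction $\theta$. The surface density $\mathbf{D}^-(\Gamma)$, via co-area/Crofton, only controls the \emph{average} over $y\in\theta^\perp$ of $\operatorname{card}(\Gamma\cap\ell_{y,\theta}\cap B(0,R))$; it says nothing about individual lines, and the distribution across lines can be arbitrarily lopsided (e.g.\ $\Gamma$ a family of hyperplanes orthogonal to a fixed direction has high surface density but is missed entirely by a full-measure set of lines in most directions $\theta$). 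You appeal to $\varphi$-regularity to upgrade average control to pointwise-in-$L$ control, but $\varphi$-regularity is an \emph{upper} bound on the local mass $\mathcal{H}^{d-1}(\Gamma\cap B(x,r))$; it caps concentration, it does not force zeros to be spread out along lines, and no amount of $\varepsilon$-room from the strict inequality compensates for lines that $\Gamma$ simply does not meet. Consequently $f|_L\equiv 0$ cannot be concluded on enough lines, and the argument does not close.

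The paper avoids this by inverting the logic: rather than trying to force $f|_L\equiv 0$, it assumes $f(0)\neq 0$ and uses Jensen's formula on the entire one-variable function $f_{x'}$ to get an \emph{upper} bound on the number of its zeros in an interval, with error term $\log(1/|f(x)|)$. Averaging that Jensen bound in $x$ over $B(0,R)$ and over $\theta\in\Sph^{d-1}$ (via Crofton), and then killing the logarithmic error with Ronkin's theorem on completely regular growth, yields the quantitative density bound
\begin{equation*}
\limsup_{R\to\infty}\frac{\mathcal{H}^{d-1}(B(0,R)\cap\{f=0\})}{\omega_d R^d}\leq A_d\,\mathbf{W}(K)
\end{equation*}
for any $f\in\mathcal{PW}_\infty(K)$ with $f(0)\neq 0$ (Proposition~\ref{rokintype1}). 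This is a statement about the zero set of $f$ itself, not about $\Gamma$, so it requires no per-line information about $\Gamma$ at all; the comparison with $\mathbf{D}^-(\Gamma)$ then happens at the level of averaged densities, where Crofton is legitimately applicable. If you want to rescue your outline, replace the per-line Beurling--Kahane step with this Jensen-plus-Ronkin estimate; the rest of your scheme then aligns with the paper.
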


In Section 5 of \cite{JM} it was shown that $A_d \geq \frac{d}{2}\frac{\omega_d}{\omega_{d-1}}$ when $K=[-1,1]^d$. This shows that the constant $A_d$ given by Theorem~\ref{Maintheo} is sharp in this most classical case. It is plausible that this constant is sharp for a larger class of origin symmetric convex sets $K$.

In this paper we show that the constant $A_d = \frac{d}{2}\frac{\omega_d}{\omega_{d-1}}$ is also sharp for the Euclidean ball $K=B(0,1)$ in any dimension.

\begin{thm}\label{sharp}  For every $\delta>0$, there is a function $f$ with $f(0)=\|f\|_{\infty}=1$, $\operatorname{supp}(\widehat{f})\subset B(0,1)$, and a $\varphi$-regular set $\Gamma$ with $\varphi(0)=1$, $\Gamma \subset \{f=0\}$, and 
$$\mathbf{D}^-(\Gamma)>2\Bigl(\frac{d}{2}\frac{\omega_d}{\omega_{d-1}}-\delta\Bigl).$$
\end{thm}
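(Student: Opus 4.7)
The plan is to construct $f$ as a product of one-dimensional sinc factors along many directions on the sphere, and take $\Gamma$ to be a subset of its zero set obtained by removing thin neighborhoods of the intersections between the resulting hyperplanes. The driver of the sharp constant is the classical identity
\begin{equation*}
\int_{S^{d-1}} |\langle \omega, \phi\rangle|\,d\sigma(\omega) = 2\omega_{d-1} \quad \text{for every } \phi \in S^{d-1},
\end{equation*}
which forces a uniformly equidistributed set $\{\omega_k\}_{k=1}^N \subset S^{d-1}$ to satisfy $\sum_k |\langle \omega_k, \phi\rangle| \approx \tfrac{2N\omega_{d-1}}{d\omega_d}$ uniformly in $\phi$.

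Concretely, for large $N$, I select unit vectors $\omega_1, \ldots, \omega_N \in S^{d-1}$ so that
\begin{equation*}
M_N := \max_{\phi \in S^{d-1}} \sum_{k=1}^N |\langle \omega_k, \phi\rangle| = \tfrac{2N\omega_{d-1}}{d\omega_d}\bigl(1 + o_N(1)\bigr),
\end{equation*}
set $a_N := 1/M_N$, and define
\begin{equation*}
f(x) := \prod_{k=1}^N \frac{\sin(2\pi a_N \langle x, \omega_k\rangle)}{2\pi a_N \langle x, \omega_k\rangle}.
\end{equation*}
Then $f(0) = 1$, and since each factor is bounded by $1$, also $\|f\|_\infty = 1$. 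The Fourier transform of the $k$-th factor is a singular measure carried on the segment $[-a_N, a_N]\omega_k$, so $\widehat f$ is the convolution of these $N$ one-dimensional measures and is supported on the zonotope $Z = \sum_k [-a_N, a_N]\omega_k$. The support function of $Z$ is $a_N \sum_k |\langle \omega_k, \phi\rangle| \leq a_N M_N = 1$, so $Z \subset B(0, 1)$ and $\operatorname{supp}(\widehat f) \subset B(0, 1)$.

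The zero set is $\Gamma_0 = \bigcup_{k,\, m \neq 0} H_{k,m}$ with $H_{k,m} = \{\langle x, \omega_k\rangle = m/(2a_N)\}$; pairwise intersections $H_{k,m} \cap H_{k', m'}$ with $k \neq k'$ are affine $(d-2)$-planes. For small $\epsilon > 0$, let $\Gamma$ be obtained from $\Gamma_0$ by deleting the $\epsilon$-neighborhood of each such intersection. Each point $x \in \Gamma$ then lies on a unique hyperplane $H_{k,m}$ and is at distance at least $\epsilon$ from every other hyperplane, so for every $x \in \R^d$ and $r \leq \epsilon$,
\begin{equation*}
\mathcal{H}^{d-1}(\Gamma \cap B(x, r)) \leq \omega_{d-1} r^{d-1}.
\end{equation*}
Thus $\Gamma$ is $\varphi$-regular with $\varphi(r) = 1$ on $[0, \epsilon]$, giving $\varphi(0) = 1$.

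For the density, each of the $N$ parallel-hyperplane families in $\Gamma_0$ has spacing $1/(2a_N)$ and hence lower Beurling surface density exactly $2 a_N$; since the lower-dimensional pairwise intersections are $\mathcal{H}^{d-1}$-null, summing yields $\mathbf{D}^-(\Gamma_0) = 2N a_N \to d\omega_d/\omega_{d-1} = 2A_d$ as $N \to \infty$. A direct count of the intersection tubes shows the density loss in passing from $\Gamma_0$ to $\Gamma$ is bounded by a constant depending only on $d$ times $\epsilon$ (the $N$-dependence cancels because $(Na_N)^2 \to A_d^2$). Taking $N$ large so that $2N a_N > 2(A_d - \delta/2)$ and then $\epsilon$ small enough that the tube removal costs less than $\delta$ in density gives $\mathbf{D}^-(\Gamma) > 2(A_d - \delta)$. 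The main technical obstacle is the quantitative equidistribution estimate on $M_N$, which I would handle via a discrepancy argument: choose the $\omega_k$ as a well-distributed configuration (e.g. a spherical $t$-design for $t$ depending on $N$) and control $\sum_k |\langle \omega_k, \phi\rangle|$ uniformly using an $\eta$-net in $\phi \in S^{d-1}$ and the $1$-Lipschitz dependence of each summand on $\phi$.
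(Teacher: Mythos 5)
Your proposal is essentially the same construction as the paper's: a product of one-dimensional band-limited factors along many equidistributed directions on $\mathbb{S}^{d-1}$, with the sharp constant driven by the identity $\int_{\mathbb{S}^{d-1}}|\langle\omega,\phi\rangle|\,d\sigma(\omega)=2\omega_{d-1}$, and $\Gamma$ obtained from the union of hyperplanes by deleting small neighborhoods of the pairwise intersections. The only differences are cosmetic: the paper uses $\cos(2\pi\tfrac{\alpha}{N}\langle x,\nu_n\rangle)$ factors (giving $\widehat f$ a finite sum of Diracs) rather than your $\operatorname{sinc}$ factors (giving $\widehat f$ supported on a zonotope), and the paper selects the directions $\nu_n$ randomly and proves the required uniform concentration of $\tfrac1N\sum_n|\langle\nu_n,\phi\rangle|$ via Markov's inequality plus an $\eta$-net, whereas you propose a deterministic well-distributed configuration; both routes deliver the same uniform estimate. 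One small point to tighten: when you argue $\varphi$-regularity for $r\le\eps$ you implicitly need $\eps$ small relative to the minimum angle between the chosen directions (so two non-parallel hyperplanes, each cut $\eps$ away from their intersection, cannot both meet a ball of radius $r$), but for a fixed finite configuration this is harmless since $\varphi$ is allowed to depend on $N$ and $\eps$.
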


Finally, we also show that for $d=2$ (in which case $A_2=\pi/2$), our constant is sharp for any convex set $K$ which is $\pi/2$-symmetric, meaning that $$(x_1,x_2)\in K \iff (-x_2,x_1)\in K.$$  This class of symmetric convex sets contains all $\ell^p$-balls for $1\leq p\leq \infty$. 

\begin{thm}\label{sharp2}  Suppose $d=2$, and $K$ is a $\pi/2$-symmetric convex set.  For every $\delta>0$, there is a function $f$ with $f(0)=\|f\|_{\infty}=1$, $\operatorname{supp}(\widehat{f})\subset K$, and a $\varphi$-regular set $\Gamma$ with $\varphi(0)=1$, $\Gamma \subset \{f=0\}$, and 
$$\mathbf{D}^-(\Gamma)>\Bigl(\frac{\pi}{2}-\delta\Bigl)\mathbf{W}(K).$$
\end{thm}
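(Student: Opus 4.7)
My plan is to construct $f$ as a product of sinc-type functions in many directions and $\Gamma$ as the union of their zero sets, exploiting the fact that in the plane every centrally symmetric convex body is a zonoid. Fix $\delta>0$. Since $K$ is $\pi/2$-symmetric it is in particular centrally symmetric, so for every $\epsilon>0$ we can approximate $K$ from the inside by a symmetric polygon $Z = \sum_{j=1}^{n}[-a_j,a_j]u_j\subset K$ (a zonotope, with $u_j\in S^1$ unit vectors and $a_j>0$) satisfying $\mathbf{W}(Z)>\mathbf{W}(K)-\epsilon$. Cauchy's perimeter formula in $\mathbb{R}^2$ gives $\mathbf{W}(Z) = \mathrm{perimeter}(Z)/\pi = (4/\pi)\sum_{j=1}^n a_j$, equivalently $\sum_{j=1}^n 2a_j = (\pi/2)\mathbf{W}(Z)$.

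I then set
\[
f(x) \;=\; \prod_{j=1}^{n} \frac{\sin(2\pi a_j \langle x, u_j\rangle)}{2\pi a_j \langle x, u_j\rangle}
\quad\text{and}\quad
\Gamma \;=\; \bigcup_{j=1}^{n} \Bigl\{x \in \mathbb{R}^2 : \langle x, u_j\rangle \in \tfrac{1}{2a_j}(\mathbb{Z}\setminus\{0\})\Bigr\}.
\]
Since each factor is bounded by $1$ and equals $1$ at the origin, $f(0)=\|f\|_{\infty}=1$. Each factor is the inverse Fourier transform of a distribution supported on the segment $[-a_j,a_j]u_j$, so by the convolution theorem $\widehat{f}$ is supported in the Minkowski sum $Z\subset K$. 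Clearly $\Gamma\subset\{f=0\}$, and each line family $\Gamma_j$ (parallel lines perpendicular to $u_j$ at spacing $1/(2a_j)$) has lower surface density exactly $2a_j$. Since lines from distinct families meet transversally in a countable ($\mathcal{H}^1$-null) set, the densities add:
\[
\mathbf{D}^-(\Gamma) \;=\; \sum_{j=1}^{n} 2 a_j \;=\; \frac{\pi}{2}\mathbf{W}(Z) \;>\; \Bigl(\frac{\pi}{2}-\delta\Bigr)\mathbf{W}(K)
\]
provided $\epsilon$ is chosen small enough.

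The main obstacle I anticipate is verifying the $\varphi$-regularity with $\varphi(0)=1$, because at a transverse intersection of $k$ families the infinitesimal $\mathcal{H}^1$-density jumps to $k\geq 2$. If the definition of $\varphi$-regularity from \cite{JM} measures $\varphi(0)$ in an $\mathcal{H}^1$-almost-everywhere sense, the countable intersection set is negligible and the construction above goes through directly; otherwise I would replace each straight line by a small $C^1$ detour around each intersection point to produce a $\Gamma$ that is a disjoint union of $C^1$ curves with the same asymptotic density. The $\pi/2$-symmetry of $K$ enters naturally at the approximation step by allowing $Z$ and hence $\Gamma$ to be chosen invariant under rotation by $\pi/2$, which makes the regularity bookkeeping respect the four-fold symmetry of the problem and keeps the estimates uniform across directions.
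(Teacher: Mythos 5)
Your proof is correct in its essentials, and it takes a genuinely different route from the paper. The paper constructs $f$ as a product of cosines $\cos\bigl(2\pi\tfrac{\alpha}{N}\langle x,\nu_n\rangle\bigr)$ with \emph{randomly chosen} directions $\nu_n$ (uniform over $\partial K$), uses a weak law of large numbers plus a covering argument to control $\|\widehat{f}\|$-support, and then computes the achieved density via the perimeter identity $\mathcal H^1(\partial K)=\pi\mathbf W(K)$; the role of $\tfrac{\pi}{2}$-symmetry there is to guarantee $\sup_{\theta\in\partial K^\circ}h_K(\theta^\perp)=1$, which makes their general estimate collapse to the sharp constant. Your approach replaces all the probability with a deterministic zonotope approximation $Z=\sum_j[-a_j,a_j]u_j\subset K$ (valid because every origin-symmetric planar convex body is a zonoid), and takes $f$ to be a product of one-dimensional $\mathrm{sinc}$ factors whose Fourier transforms are supported on the segments $[-a_j,a_j]u_j$; the density computation $\sum_j 2a_j = \tfrac{1}{2}\,\mathrm{perimeter}(Z)=\tfrac{\pi}{2}\mathbf W(Z)$ is then elementary. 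This is cleaner, and notably it does \emph{not} use $\tfrac{\pi}{2}$-symmetry at all: your argument, as written, establishes sharpness for \emph{every} origin-symmetric convex body in $\mathbb R^2$, which is a strictly stronger conclusion than the paper's Theorem~\ref{sharp2}. Your closing remark that the $\tfrac{\pi}{2}$-symmetry ``enters at the approximation step\ldots for regularity bookkeeping'' is not correct and should simply be deleted; nothing in the construction uses it.

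Two small points on the regularity step. First, the paper's notion of $\varphi$-regularity is a pointwise condition ($\mathcal H^1|_\Gamma(B(x,r))\le\varphi(r)\,\omega_1 r$ for \emph{every} $x$ and every $r\in(0,1)$), not an $\mathcal H^1$-a.e.\ condition, so the union of straight lines really does fail $\varphi(0)=1$ near an intersection point, where the local $\mathcal H^1$-mass in $B(x,r)$ is $\ge 4r$ rather than $2r$. Second, replacing lines by ``small $C^1$ detours'' around intersections would destroy the inclusion $\Gamma\subset\{f=0\}$, since the detour curves are not in the zero set of $f$. The correct fix (and what the paper does) is to \emph{excise} small neighborhoods of the intersection points from $\Gamma$; since there are finitely many directions, the intersection set is discrete with a positive separation in each bounded region, and removing $r_0$-balls around each intersection with $r_0\to 0$ yields a $\varphi$-regular subset of $\{f=0\}$ whose lower density is decreased by at most $C\delta$. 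With that replacement, your proof is complete.
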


In order to prove Theorem \ref{Maintheo} we prove an improved bound on the density of the zero set of a Paley-Weiner class function (Proposition \ref{rokintype1} below).  Compared with \cite{JM}, the main new tool is a modification of an averaging trick which has appeared before in studying the zero sets of analytic functions \cite{BB, LR}.  We consider it of independent interest that the technique provides the sharp bound when executed properly in the case when $K$ is a Euclidean ball in all dimensions, and a wide class of convex sets when $d=2$.

\subsection{Acknowledgements}  The author B. Jaye is supported in part by NSF grants DMS-2049477 and DMS-2103534 and M. Mitkovski by NSF DMS-2000236.  This research was primarily carried out while in residence at the ICERM program Harmonic Analysis and Convexity in Fall 2022.  The authors would like to thank Galyna Livshyts and Fedor Nazarov for helpful remarks.

\section{Notation}

For a non-negative integer $k$, let $\omega_k$ denote the volume of the unit ball in $\mathbb{R}^k.$ Recall that $\omega_k = \frac{\pi^k/2}{\Gamma(k/2 +1)}$. For $E\subset \mathbb{R}^d$, we define

\begin{equation*}
    \mathcal{H}^k(E) = \lim_{\delta\to 0^{+}}\inf\Bigl\{\omega_k\sum_{j}r_j^k: E \subset \cup_{j}B(x_j,r_j)  \text{ and } r_j \leq \delta\Bigl\}.
\end{equation*}

Restricting $\mathcal{H}^k$ to a $k-$dimensional plane, $\mathcal{H}^k= m_k$, where $m_k$ is the $k-$dimensional Lebesgue measure. Furthermore, $\mathcal{H}^{d-1}(\mathbb{S}^{d-1}) = d\omega_d$.

Let $K\subset \mathbb{R}^d, d\geq 2$, be an origin symmetric compact convex. We denote by $\mathcal{P}\mathcal{W}_p(K)$ the Paley-Wiener space, consisting of functions in $L^p(\R^d)$ whose distributional Fourier transform is supported in $K$.

As mentioned in the introduction, the (lower) surface density of a set $\Gamma \subset \mathbb{R}^d$ is given as,

\begin{equation*}
       D^{-}(\Gamma) = \liminf_{r\mapsto \infty}\inf_{x\in \mathbb{R}^d}\frac{\mathcal{H}^{d-1}(\Gamma\cap B(x,r))}{m_d(B(x,r))}.
\end{equation*}

We define regular sets (and measures) in the same way as given in \cite{JM}. Let us suppose $\varphi:[0,1)\mapsto [0,\infty)$ is function continuous at 0. We say a measure $\mu$ is $\varphi-$regular if for every $x\in \mathbb{R}^d$ and for all $r\in(0,1),$

\begin{equation*}
    \mu(B(x,r)) \leq \varphi(r)\omega_{d-1}r^{d-1}.
\end{equation*}

We say a closed set $E\subset \mathbb{R}^d$ is $\varphi-$regular if the measure $\mathcal{H}^{d-1}|_{E}$ is $\phi-$regular.

For an origin-symmetric convex set $K$, we denote by $\mathbf{W}(K)$ the mean width of $K$, which is defined by

\begin{equation*}
    \mathbf{W}(K) = \frac{2}{\mathcal{H}^{d-1}(\mathbb{S}^{d-1})}\int_{\mathbb{S}^{d-1}}h_K(\theta)d\mathcal{H}^{d-1}(\theta),
\end{equation*}
where $h_K(\theta) = \max_{x\in K}\langle x,\theta\rangle$ is the support function of $K$.

Clearly, for an origin-centered ball with radius $R$, we have $\mathbf{W}(B(0,R)) = 2R$. It is also not hard to check that $\mathbf{W}([-R,R]^d) = \frac{2R\omega_{d-1}}{\omega_d}$ (see, e.g., Section 5 of \cite{JM}).

\section{The proof of Theorem \ref{Maintheo}}

The key new result we prove is the following:

\begin{prop}\label{rokintype1}
   If $f\in \mathcal{P}\mathcal{W}_{\infty}(K)$ satisfies $\|f\|_{\infty}\leq 1$ and $|f(0)|>0$, then

   \begin{equation}\label{noavedens}
       \limsup_{R\to\infty}\frac{\mathcal{H}^{d-1}(B(0,R)\cap \{f=0\})}{\omega_d R^d} \leq A_d\mathbf{W}(K),
   \end{equation}
where

\begin{equation*}
    A_d = \frac{d}{2}\frac{\omega_d}{\omega_{d-1}}.
\end{equation*}
\end{prop}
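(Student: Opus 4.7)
The plan is to use the Cauchy-Crofton integral-geometric formula to reduce the statement to a one-dimensional Paley-Wiener zero-density estimate along each affine line, and then to carry out an averaging that delivers the sharp constant. Writing $Z = \{f = 0\}$ (an $\mathcal{H}^{d-1}$-rectifiable set, since $f$ is real-analytic), the Cauchy-Crofton formula yields
\begin{equation*}
2\omega_{d-1}\,\mathcal{H}^{d-1}(Z\cap B(0,R)) = \int_{\mathbb{S}^{d-1}}\int_{\theta^\perp\cap B(0,R)} N_{y,\theta}\bigl(\sqrt{R^2-|y|^2}\bigr)\,dm_{d-1}(y)\,d\mathcal{H}^{d-1}(\theta),
\end{equation*}
where $N_{y,\theta}(T) := \#\{t\in [-T,T] : f(y+t\theta)=0\}$. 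For a.e.\ $(y,\theta)$ the slice $g_{y,\theta}(t) := f(y+t\theta)$ is a $1$-dimensional entire function of exponential type at most $2\pi h_K(\theta)$, bounded by $1$ on $\R$. If one could justify the sharp Beurling-type bound $N_{y,\theta}(T) \le 4h_K(\theta)T + o(T)$ and insert it into the Crofton identity, the proposition would follow from the two integral identities
\begin{equation*}
\int_{\theta^\perp\cap B(0,R)}\sqrt{R^2-|y|^2}\,dm_{d-1}(y) = \tfrac{1}{2}\omega_d R^d, \qquad \int_{\mathbb{S}^{d-1}}h_K(\theta)\,d\mathcal{H}^{d-1}(\theta) = \tfrac{d\omega_d}{2}\mathbf{W}(K),
\end{equation*}
which multiply together (with the factor $4$ from the slicewise bound) to produce exactly $\omega_d R^d\cdot A_d\mathbf{W}(K)$.

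The technical heart of the argument is obtaining the sharp slice bound in a form uniform enough to integrate. Applied naively at the fixed center $y$, complex Jensen yields only the integrated inequality
\begin{equation*}
\int_0^T \frac{n^{\mathbb{C}}_{y,\theta}(r)}{r}\,dr \le 4h_K(\theta) T - \log|f(y)|,
\end{equation*}
and extracting a pointwise bound on $N_{y,\theta}(T)$ from this by logarithmic convexity necessarily loses a multiplicative constant, which would destroy the sharp value of $A_d$. The key new ingredient is a modification of the averaging trick of \cite{BB, LR}: one applies Jensen at a family of centers along (or near) the line and integrates the resulting inequalities, chosen so that each real zero $t_0$ of $g_{y,\theta}$ inside the interval of interest contributes with exactly the correct weight after exchange of integrations. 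The contribution of the logarithmic singularities is thereby spread out, so that $-\log|f(y)|$ gets replaced by a line average $\tfrac{1}{|I|}\int_I (-\log|f(y+s\theta)|)\,ds$, a much better-behaved quantity than a pointwise value.

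Substituting the averaged 1D bound into the Crofton identity and applying Fubini, the error contribution takes the form $c_d\int_{B(0,R)}(-\log|f(x)|)\,W(x,R)\,dm_d(x)$ for an explicit geometric weight $W$ coming from the $(y,\theta)\leftrightarrow x$ change of variables, while the principal term comes out to precisely $\omega_d R^d\cdot A_d\mathbf{W}(K)$. The main obstacle is then to show that this integrated error is $o(R^d)$, which reduces to a growth estimate of the form $\int_{B(0,R)} |\log|f||\,dm_d = o(R^{d+1})$ for $f\in \mathcal{P}\mathcal{W}_\infty(K)$ with $\|f\|_\infty \le 1$. I would prove this using the Paley-Wiener envelope $|f(x+iy)|\le e^{2\pi h_K(y)}$ together with the subharmonicity of $\log|f|$ along complex lines: the sub-mean-value inequality bounds $-\log|f|$ pointwise by complex-disk averages, and the PW envelope bounds the imaginary-direction contributions from above. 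Controlling this $\log|f|$ error in the presence of the zero set $Z$, so that no uniform lower bound on $|f|$ is needed, is the most delicate technical step.
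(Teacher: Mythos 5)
Your proposal follows essentially the same route as the paper's proof: Crofton reduction to one-dimensional zero counts along affine lines, Jensen's formula on each slice, an averaging over translated centers to spread the logarithmic singularity and avoid losing the sharp constant, and finally a Ronkin-type estimate showing $\int_{B(0,R)}\log(1/|f|)\,dm_d = o(R^{d+1})$. The paper realizes the averaging concretely via the quantity $V_\theta = \int_{B(0,R)}\int_0^{\eps R}\card\{|s|\le t : f(x+s\theta)=0\}\tfrac{dt}{t}\,dm_d(x)$ with a scale parameter $\eps\to 0$ (so the error from the log term carries a prefactor $1/(2\eps R)$), and it cites the $o(R^{d+1})$ log-integral bound as a known lemma (Lemma 4.4 of \cite{JM}, following Ronkin \cite{LR}) rather than rederiving it from the Paley--Wiener envelope and subharmonicity as you sketch.
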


The path from Proposition \ref{rokintype1} to Theorem \ref{Maintheo} follows the same lines as in \cite{JM}.  First, we note that (\ref{noavedens}) implies the averaged estimate
   \begin{equation*}
       \limsup_{R\mapsto\infty}\frac{1}{\omega_d R^d}\int_{0}^{R}\mathcal{H}^{d-1}(B(0,r)\cap \{f=0\})\frac{dr}{r} \leq \frac{A_d}{d}\mathbf{W}(K),
   \end{equation*}
for the same value of $A_d$ (this gives an improved version of Proposition 3.1 of \cite{JM}).  Now, following Section 3 of \cite{JM} line-for-line with this new value of constant $A_d$, we first obtain the following improved version of Proposition 3.2 of \cite{JM}:

\begin{prop}\label{compactness1}
 Fix $\delta>0, R_0>0.$ There exists $\varepsilon>0$ such that for every $\varphi-$regular set $\Gamma$ and $f\in \mathcal{P}\mathcal{W}_{\infty}(K)$ that satisfies $\|f\|_{\infty}\leq 1$ and $|f(0)|>1/2$, there exists $R\geq R_0$ such that 

 \begin{equation*}
    \frac{1}{\omega_d R^d}\int_{0}^{R}\mathcal{H}^{d-1}(\Gamma\cap B(x,r)\cap \{|f|\leq \varepsilon\})\frac{dr}{r} \leq \varphi(0)\Bigl(\frac{A_d}{d}\mathbf{W}(K)+\delta\Bigl).
 \end{equation*}

\end{prop}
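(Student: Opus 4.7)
The plan is to argue by contradiction using a compactness/normal-family argument, following the scheme of Proposition 3.2 of \cite{JM} essentially verbatim, now with the sharper Proposition \ref{rokintype1} in hand. First I would negate the statement: there exist $\delta, R_0 > 0$ and, for each $n \in \mathbb{N}$, a $\varphi$-regular set $\Gamma_n$ and a function $f_n \in \mathcal{P}\mathcal{W}_\infty(K)$ with $\|f_n\|_\infty \leq 1$, $|f_n(0)| > 1/2$, such that
$$\frac{1}{\omega_d R^d}\int_0^R \mathcal{H}^{d-1}\bigl(\Gamma_n \cap B(0,r) \cap \{|f_n| \leq 1/n\}\bigr)\frac{dr}{r} > \varphi(0)\Bigl(\frac{A_d}{d}\mathbf{W}(K)+\delta\Bigr)$$
for every $R \geq R_0$ (by translation one reduces the $B(x,r)$ of the statement to $B(0,r)$).

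The second step is to extract limits. The family $\mathcal{P}\mathcal{W}_\infty(K)$ intersected with the closed unit ball of $L^\infty$ is normal, its members being uniformly Lipschitz on $\mathbb{R}^d$ by Paley--Wiener, so after a subsequence $f_n \to f$ uniformly on compacta, with $f \in \mathcal{P}\mathcal{W}_\infty(K)$, $\|f\|_\infty \leq 1$, and $|f(0)| \geq 1/2 > 0$. The positive Radon measures $\nu_n := \mathcal{H}^{d-1}|_{\Gamma_n \cap \{|f_n| \leq 1/n\}}$ are dominated by $\mathcal{H}^{d-1}|_{\Gamma_n}$ and hence $\varphi$-regular; passing to a further subsequence, $\nu_n \to \nu$ vaguely for some positive Radon measure $\nu$, which inherits $\varphi$-regularity from the bound $\nu(B(x,r)) \leq \liminf_n \nu_n(B(x,r))$ on open balls. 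Uniform convergence then forces $\operatorname{supp}(\nu) \subset \{f=0\}$: if $|f(x_0)| > 0$, then $|f_n|$ is bounded below on a neighborhood $U$ of $x_0$ for large $n$, so $\nu_n(U) = 0$ eventually.

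The crux is that since $\nu$ is $\varphi$-regular and concentrated on $\{f=0\}$, the standard covering argument in the definition of Hausdorff measure (using continuity of $\varphi$ at $0$) yields
$$\nu(B(0,r)) \leq \varphi(0)\,\mathcal{H}^{d-1}\bigl(B(0,r)\cap\{f=0\}\bigr) \quad \text{for every } r>0.$$
Passing to the limit in the negated inequality by dominated convergence (justified for all but countably many $r$ since $\nu(\partial B(0,r))=0$ for such $r$, with uniform $L^1$-domination of $\nu_n(B(0,r))/r$ on $(0,R]$ provided by $\varphi$-regularity for small $r$ and a trivial ball-covering bound for $r$ of order $1$ and larger) produces, for every $R \geq R_0$,
$$\varphi(0)\Bigl(\frac{A_d}{d}\mathbf{W}(K)+\delta\Bigr) \leq \frac{\varphi(0)}{\omega_d R^d}\int_0^R \mathcal{H}^{d-1}\bigl(B(0,r)\cap\{f=0\}\bigr)\frac{dr}{r}.$$
Sending $R \to \infty$ and invoking the averaged form of Proposition \ref{rokintype1} yields $\delta \leq 0$, a contradiction. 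The only genuine subtlety is ensuring that vague convergence of $\nu_n$ is compatible with the radial average against $dr/r$; this is routine given the uniform $\varphi$-regularity and is the same step carried out in \cite{JM}.
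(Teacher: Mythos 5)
Your proof is correct and follows essentially the same route that the paper takes (it simply defers to the compactness argument of Proposition 3.2 in \cite{JM}, now fed with the improved averaged form of Proposition \ref{rokintype1}): contradiction, normal-family extraction of a limit $f\in\mathcal{P}\mathcal{W}_\infty(K)$ with $|f(0)|\geq 1/2$, vague extraction of a $\varphi$-regular limit measure $\nu$ supported in $\{f=0\}$, the covering estimate $\nu(A)\leq\varphi(0)\,\mathcal{H}^{d-1}(A\cap\{f=0\})$, and passage to the limit in the radial average before invoking Proposition \ref{rokintype1}. All the subtle points you flag (null-boundary radii for the vague limit, $L^1$-domination of $\nu_n(B(0,r))/r$ via $\varphi$-regularity near $0$ and a ball-covering bound at scale $\gtrsim 1$, and the implication from the non-averaged to the averaged density bound) are handled correctly.
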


With this result in hand, one completes the proof of Theorem \ref{Maintheo} in precisely the same manner as in Section 3.4 of \cite{JM}.  We now return to give the proof of Proposition \ref{rokintype1}.


\begin{proof}[Proof of Proposition \ref{rokintype1}]

For $\theta\in \Sph^{d-1}$, $R>0$ and $\eps>0$, consider the quantity
$$
V_{\theta} = \int_{B(0,R)}\int_0^{\eps R} \text{card}(\{|s|\leq t: f(x+s\theta)=0\})\frac{dt}{t}dm_d(x).
$$

For ease of notation,  start by fixing $\theta =(0,0,....,0,1) \in \mathbb{S}^{d-1}$ and consider $x = (x',x_d)\in \R^d$ where $x'\in \mathbb{R}^{d-1}$. Denote by $f_{x'}(t) = f(x',t)$, which has its one-dimensional Fourier transform supported in the interval $[-2\pi h_K(\theta), 2\pi h_K(\theta)]$. Therefore, $f_{x'}$ extends to a entire function in $\mathbb{C}$ and
    $|f_{x'}(t+si)|\leq e^{2\pi h_K(\theta) |s|}$ for $t,s\in \R$.
Applying Jensen's formula, we obtain that for any $x\in \R^d$,
\begin{equation}\nonumber\begin{split}\int_0^{\eps R}  \text{card}(\{|s|\leq t: & \,f_{x'}(x_n+s)=0\})\frac{dt}{t} \\
&\leq  \frac{1}{2\pi}\int_0^{2\pi}2\pi h_K(\theta) \eps R |\sin(\varphi)|d\varphi +\log\Bigl(\frac{1}{|f_{x'}(x_n)|}\Bigl)\\
&= 4\eps Rh_K(\theta)+\log{\bigg(\frac{1}{|f(x)|}\bigg)}.
\end{split}\end{equation}
Substituting this bound into the definition of $V_{\theta}$ yields 
\begin{equation}\label{Vupper}\begin{split}
   V_{\theta}  &\leq   4\eps R^{d+1}\omega_dh_K(\theta)+\int_{B(0,R)} \log{\bigg(\frac{1}{|f(x)|}\bigg)}dm_d(x).
\end{split}\end{equation}

On the other hand, $V_{\theta}$ equals
$$\int\limits_{B^{d-1}(0,R)} \int_{-\sqrt{R^2-|x'|^2}}^{\sqrt{R^2-|x'|^2}}\int_{0}^{\epsilon R}\!\!\!\card(\{|s|\leq t: f_{x'}(x_d+s)=0\})\frac{dt}{t}dm_1(x_d)dm_{d-1}(x').$$
Fix $x'\in B^{(d-1)}(0,R)$, and define a locally finite Borel measure $\mu$ on $\R$ via $$\mu = \sum_{s\in \R\,:\,f_{x'}(s)=0}\delta_s.$$ 
Then
\begin{equation}\begin{split}\nonumber\int_{-\sqrt{R^2-|x'|^2}}^{\sqrt{R^2-|x'|^2}}&\int_{0}^{\epsilon R}\int_{-t}^td\mu(x_d+s)\frac{dt}{t}dm_1(x_d) \\&= \int_{-\sqrt{R^2-|x'|^2}-\epsilon R}^{\sqrt{R^2-|x'|^2}+\epsilon R}\int_{0}^{\epsilon R}\int_{-\sqrt{R^2-|x'|^2}}^{\sqrt{R^2-|x'|^2}}\mathbf{1}_{[r-t,r+t]}(x_d)dm_1(x_d)\frac{dt}{t}d\mu(r).\end{split}\end{equation}
Now, note that if $r\in [-\sqrt{R^2-|x'|^2}+\epsilon R, \sqrt{R^2-|x'|^2}-\epsilon R]$, and $t\in (0, \eps R)$, then $$[r-t, r+t]\subset [-\sqrt{R^2-|x'|^2}, \sqrt{R^2-|x'|^2}],$$
and so
\begin{equation}\begin{split}\int_{-\sqrt{R^2-|x'|^2}}^{\sqrt{R^2-|x'|^2}}&\int_{0}^{\epsilon R}\int_{-t}^td\mu(x_d+s)\frac{dt}{t}dm_1(x_d) \\&\geq 2\eps R\cdot \mu([-\sqrt{R^2-|x'|^2}+\epsilon R, \sqrt{R^2-|x'|^2}-\epsilon R]).\end{split}\end{equation}
Observing that
$$\sqrt{R^2-|x'|^2}-\eps R\geq \sqrt{(1-2\eps)R^2-|x'|^2} \text{ provided that }|x'|\leq \sqrt{1-2\eps}\cdot R,$$
we infer that, with $\theta = (0,0,\dots, 1)$, $V_{\theta}$ is at least
$$2\eps R\cdot\!\!\!\!\!\!\!\!\!\!\int\limits_{B^{(d-1)}(0,\sqrt{1-2\eps}\cdot R)}\!\!\!\!\!\!\!\!\!\!\!\!\!\!\card(\{|r|\leq \sqrt{(1-2\eps)R^2-|x'|^2}: \,f(x'+r)=0\}) dm_{d-1}(x').$$
Therefore, with a suitable rotation, we find that for every $\theta\in \Sph^{d-1}$,
$$V_{\theta}\geq 2\eps R\int\limits_{\theta^{\perp}\cap B(0, \sqrt{1-2\eps}\cdot R)}\card(B(0, \sqrt{1-2\eps}\cdot R)\cap \{f=0\}\cap \ell_{y,\theta}) dm_{d-1}(y),$$
where $\ell_{y, \theta}$ is the line through $y$ with direction $\theta$.  Combining this with (\ref{Vupper}) therefore yields
\begin{equation}\begin{split}\label{beforesphe}
\int\limits_{\theta^{\perp}\cap B(0, \sqrt{1-2\eps}\cdot R)}\!\!\!\!\!\!&\card(B(0, \sqrt{1-2\eps}\cdot R)\cap \{f=0\}\cap \ell_{y,\theta}) dm_{d-1}(y)\\
&\leq 2h_K(\theta)\omega_dR^d+\frac{1}{2\eps R}\int_{B(0,R)}\log{\bigg(\frac{1}{|f(x)|}\bigg)}dm_d(x).
\end{split}\end{equation}

The Crofton formula (e.g. \cite[3.2.26]{F}) states that for any set $E\subset \R^d$ that is $(d-1)$-rectifiable, 
$$\mathcal{H}^{d-1}(E) = \frac{1}{2\omega_{d-1}}\int_{\Sph^{(d-1)}}\int_{\theta^{\perp}}\card(E\cap \ell_{y, \theta})dm_{d-1}(y)d\mathcal{H}^{d-1}(\theta).$$
Whence, integrating (\ref{beforesphe}) over $\Sph^{d-1}$ with respect to the $\mathcal{H}^{d-1}$ measure yields that
\begin{equation}\nonumber\begin{split} & \frac{\mathcal{H}^{(d-1)}(B(0, \sqrt{1-2\eps}R)\cap  \{f=0\})}{\omega_dR^d}\\&\leq \Bigl\{\frac{d\omega_d}{2\omega_{d-1}}\mathbf{W}(K)+\frac{d}{2\omega_{d-1}}\frac{1}{2\eps R^{d+1}}\int_{B(0,R)}\log{\bigg(\frac{1}{|f(x)|}\bigg)}dm_d(x)\Bigl\}.\end{split}\end{equation}

Regarding the second term on the right hand side of this inequality, it follows from work of Ronkin \cite{LR} on functions with completely regular growth (see Lemma 4.4 of \cite{JM} for a concise proof) that 
$$\lim_{R\to \infty}\frac{1}{R^{d+1}}\int_{B(0,R)}\ln{\bigg(\frac{1}{|f(x)|}\bigg)}dm_d(x)=0.$$
Therefore,
$$\limsup_{R\to \infty}\frac{\mathcal{H}^{(d-1)}(B(0, \sqrt{1-2\eps}R)\cap  \{f=0\})}{\omega_d R^d} \leq \frac{d\omega_d}{2\omega_{d-1}}\mathbf{W}(K).
$$
Letting $\eps\to 0$ completes the proof of the proposition.
\end{proof}

\section{The sharpness of the bound}

\subsection{The general construction}  Assume that $K$ is an origin symmetric strictly convex body. For $x\in \partial K$, let $\nu(x)$ be the outward pointing unit normal vector to $K$.

Let us recall the polar body of $K$, $$K^{\circ} = \{y\in \R^d: \langle x,y\rangle \leq 1\text{ for every }x\in K\},$$
which satisfies that
$$\|x\|_{K^{\circ}} : = \inf\{\lambda\geq 0: x\in \lambda K^{\circ}\} = h_K(x).
$$

For a bounded continuous function $g:\partial K\to [0,\infty)$, set
$$\langle g\rangle  = \frac{1}{\Haus^{d-1}(\partial K)}\int_{\partial K}g(x)d\Haus^{d-1}(x)
$$
and
$$\mu = \frac{1}{\Haus^{d-1}(\partial K)}\sup_{y\in K^{\circ}} \int_{\partial K}|\langle \nu(x), y\rangle| g(x) d\Haus^{d-1}(x).$$  Our first goal is to prove the following

\begin{prop}\label{genexample}  For every $\eps>0$ and every bounded continuous function $g:\partial K\to [0,\infty)$, there exist
\begin{enumerate}
\item a function $f\in \mathcal{P}\mathcal{W}_{\infty}(K)$ with $|f(0)|=\|f\|_{\infty}=1$, and 

\item  a continuous function $\varphi:[0,\infty)\to [0,\infty)$ with $\varphi(0)=1$, and a $\varphi$-regular set $\Gamma\subset \R^d$ with $\Gamma\subset \{f=0\}$,
\end{enumerate}
such that 
$$\mathbf{D}^-(\Gamma)\geq \frac{2\langle g\rangle}{\mu}-\eps.$$
\end{prop}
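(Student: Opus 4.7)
The plan is to construct $f$ as a finite product of one-dimensional cosine factors whose frequencies are distributed according to $g$, and then take $\Gamma$ to be the zero set of $f$ after excising a tiny neighborhood of its self-intersections.

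I would first translate $g$ into a measure on $\Sph^{d-1}$ via the Gauss map $\nu:\partial K\to\Sph^{d-1}$. Set $\lambda_0:=\nu_{\ast}(g\,d\Haus^{d-1}|_{\partial K})$; then the change of variables yields
$$\mu=\frac{1}{\Haus^{d-1}(\partial K)}\sup_{y\in\partial K^\circ}\int_{\Sph^{d-1}}|\langle\theta,y\rangle|\,d\lambda_0(\theta).$$
Rescaling $\lambda:=\lambda_0/(\mu\,\Haus^{d-1}(\partial K))$ gives $\sup_{y\in\partial K^\circ}\int|\langle\theta,y\rangle|\,d\lambda=1$---equivalently, the zonoid of $\lambda$ is contained in $K$---and $\lambda(\Sph^{d-1})=\langle g\rangle/\mu$. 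I would then approximate $\lambda$ weakly by a finite atomic measure $\lambda_N=\sum_{i=1}^N r_i\delta_{\theta_i}$ whose zonotope $Z_N=\sum_{i=1}^N[-r_i\theta_i,r_i\theta_i]$ still lies in $K$, with $\lambda_N(\Sph^{d-1})$ as close to $\langle g\rangle/\mu$ as desired. Setting
$$f(x):=\prod_{i=1}^N \cos(2\pi r_i\langle x,\theta_i\rangle),$$
each factor has Fourier support $\{\pm r_i\theta_i\}\subset K$; by convolution the spectrum of $f$ lies in the vertex set of $Z_N\subset K$, so $f\in\mathcal{PW}_\infty(K)$. Each factor is bounded by $1$ and equals $1$ at the origin, hence $|f(0)|=\|f\|_\infty=1$. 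The zero set of $f$ is the union of $N$ families $H_i$ of parallel hyperplanes orthogonal to $\theta_i$, with spacing $1/(2r_i)$.

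The main obstacle is arranging $\varphi$-regularity with $\varphi(0)=1$: the raw union $\bigcup H_i$ fails this, since near the $(d-2)$-dimensional intersection of two hyperplanes from distinct families the local $\Haus^{d-1}$-density is at least $2\omega_{d-1}r^{d-1}$. I would remedy this by excising an $\eta$-tube around every such pairwise intersection and setting
$$\Gamma:=\Bigl(\bigcup_{i=1}^N H_i\Bigr)\setminus\bigcup_{i\ne j}\bigl\{x:\operatorname{dist}(x,H_i\cap H_j)<\eta\bigr\}$$
for a small $\eta>0$. At scales $r<c_N\eta$, with $c_N$ a constant depending on the minimum angle between distinct $\theta_i$'s, any ball of radius $r$ meets $\Gamma$ in a piece of at most one hyperplane, so $\varphi(r)\leq 1$ on that range and hence $\varphi(0)=1$.

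Finally, a direct parallel-hyperplane computation (using $\int_{-1}^{1}(1-u^2)^{(d-1)/2}\,du=\omega_d/\omega_{d-1}$) shows that each family $H_i$ has surface density exactly $2r_i$ in the $\mathbf{D}^-$ sense, uniformly in the base point; since distinct families intersect only on $\Haus^{d-1}$-null sets, these densities add. The $\Haus^{d-1}$-measure excised in a ball $B(x,R)$ is of order $\eta R^d$---there are $\sim R^2\lambda_N^2$ pairwise plane intersections in $B(x,R)$, each contributing an $\eta$-tube of area $O(R^{d-2})$---so the density loss is $O(\eta)$. Choosing $\eta$ sufficiently small and $N$ sufficiently large yields $\mathbf{D}^-(\Gamma)\geq 2\lambda_N(\Sph^{d-1})-O(\eta)\geq 2\langle g\rangle/\mu-\eps$, as required.
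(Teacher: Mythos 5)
Your construction is correct, and the final object you build — a product of cosines $\prod_i\cos(2\pi r_i\langle x,\theta_i\rangle)$ together with the zero set pruned near pairwise hyperplane intersections — is the same as the paper's. What differs is how you establish that a good finite family $\{r_i\theta_i\}$ exists with $\sum_i[-r_i\theta_i,r_i\theta_i]\subset K$ and $\sum_i r_i$ close to $\langle g\rangle/\mu$. You push $g$ forward by the Gauss map to a measure $\lambda$ on $\Sph^{d-1}$, normalise so that its zonoid is contained in $K$, and invoke the classical fact that zonoids are Hausdorff limits of zonotopes (followed by a slight shrink) to obtain $\lambda_N$. The paper instead proceeds probabilistically: it samples $x_1,\dots,x_N$ i.i.d.\ uniformly on $\partial K$, sets $r_n=\alpha g(x_n)/N$, $\theta_n=\nu(x_n)$, and uses Markov's inequality plus a net on $\partial K^\circ$ to show that with positive probability the empirical average $\frac1N\sum_n g(x_n)|\langle\nu(x_n),y\rangle|$ is uniformly within $3\delta$ of its mean $\mu_y$, which lets one take $\alpha=1/(\mu+3\delta)$. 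The two routes are equivalent in substance — both are proving a discretisation statement for the support function $y\mapsto\int|\langle\theta,y\rangle|\,d\lambda(\theta)$ — but yours leans on a packaged theorem from convex geometry while the paper's is self-contained and more elementary. Your approach also sidesteps the explicit concentration/covering computation, which is a modest gain in transparency. The remaining steps (Fourier support via $N$-fold convolution, density of a single cosine nodal set equal to $2r_i$ via the slicing identity $\int_{-1}^1(1-u^2)^{(d-1)/2}du=\omega_d/\omega_{d-1}$, additivity of densities across families, and the $O(\eta)$ loss from excising tubes around the $(d-2)$-dimensional intersections after $N$ is fixed) all match the paper. Two small points worth tightening: weak convergence alone does not force $Z_N\subset K$ — you need the uniform-on-$\partial K^\circ$ convergence of $h_{Z(\lambda_N)}$ (which holds by equicontinuity) followed by a rescaling $\lambda_N\mapsto(1-\delta_N)\lambda_N$; and the excised-mass estimate $O(\eta R^d)$ hides a factor $(\sum_i r_i)^2$, so it is important (and you do) to fix $N$ before sending $\eta\to0$.
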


\begin{proof} For $N\in \mathbb{N}$, select $x_1,\dots, x_N$ uniformly and independently on $\partial K$, and consider the associated  vectors $\nu_n = \nu(x_n)$ for $n=1,\dots, N$.  For $\alpha>0$, consider the function
$$f(x) = \prod_{n=1}^N \cos\Bigl(2\pi \frac{\alpha g(x_n)}{N}\langle x, \nu_n\rangle\Bigl).$$
The Fourier transform of $f$ is the $N$-fold convolution of the factors $$\tfrac{1}{2}\bigl(\delta_{\frac{\alpha g(x_n)}{N}\nu_n}+\delta_{-\frac{\alpha g(x_n)}{N}\nu_n}\bigl)$$
and therefore $$\supp(\widehat{f})\subset \Bigl\{\, \frac{\alpha}{N}\sum_{n=1}^N g(x_n)\eps_n\nu_n: \; \eps_n\in \{-1,1\}\Bigl\}.$$
Our goal is to find (the largest) $\alpha>0$ to ensure that $\supp(\wh{f})\subset K$.  Recall that, for any $x\in \R^d$ $\|x\|_K = \sup_{y\in \partial K^{\circ}}\langle x,y\rangle,$
and $x\in K$ if and only if $\|x\|_K\leq 1$.  Therefore, we want to select $\alpha>0$ so that, for any $y\in \partial K^{\circ}$,
$$ \alpha \sup_{\eps_n\in \{-1,1\}}\frac{1}{N}\sum_{n=1}^Ng(x_n)\eps_n\langle \nu_n, y\rangle\leq 1,
$$
or, in other words,
$$\alpha \frac{1}{N}\sum_{n=1}^N|\langle \nu_n, y\rangle |g(x_n)\leq 1.$$
For $y\in \partial K^{\circ}$, the random variable $X_{n,y}=|\langle \nu_n, y\rangle |g(x_n)$ has mean
$$\mu_y = \frac{1}{\Haus^{d-1}(\partial K)}\int_{\partial K}|\langle \mu(x),y\rangle|g(x) d\Haus^{d-1}(x),$$
and its variance may be crudely bounded independently of $n$ in terms of the geometry of the convex body $K$ and the $L^{\infty}(\partial K)$ norm of $g$.  Since, $X_{n,y}$ are independent, Markov's inequality yields that
\begin{equation}\label{weaklaw}\text{Prob}\Bigl(\Bigl|\frac{1}{N}\sum_{n=1}^N|\langle \nu_n, y\rangle|g(x_n) - \mu_y\Bigl|>\delta\Bigl) \leq  \frac{C(K,g)}{N\delta^2}.\end{equation}
Now, for  $\nu_1, \dots, \nu_N$ fixed on $\Sph^{d-1}$, the function $y\mapsto \frac{1}{N}\sum_{n=1}^N |\langle \nu_n, y\rangle|g(x_n)$ is Lipschitz continuous with Lipschitz constant bounded by $K:=\|g\|_{\infty}$. Similarly, the function $\mu_y$ is Lipschitz continuous with constant $\leq K$. 
 Elementary volume considerations ensure that the set $\partial  K^{\circ}$ can be covered by $C(\delta/K)^{-(d-1)}$ balls $B(y_j,\delta/K)$ with $y_j\in \partial K^{\circ}$.  Therefore, if 
$\max_{y\in \partial K^{\circ}}\Bigl|\frac{1}{N}\sum_{n=1}^N|\langle \nu_n, y\rangle|g(x_n) - \mu_y\Bigl|>3\delta$, then there must exist $j$ with $\Bigl|\frac{1}{N}\sum_{n=1}^N|\langle \nu_n, y_j\rangle|g(x_n) - \mu_{y_j}\Bigl|>\delta$.  Consequently, (\ref{weaklaw}) ensures that
\begin{equation}\begin{split}\nonumber\text{Prob}&\Bigl(\max_{y\in \partial K^{\circ}}\Bigl|\frac{1}{N}\sum_{n=1}^N|\langle \nu_n, y\rangle|g(x_n) - \mu_y\Bigl|>3\delta\Bigl) \\&\leq \sum_{j}\text{Prob}\Bigl(\Bigl|\frac{1}{N}\sum_{n=1}^N|\langle \nu_n, y_j\rangle|g(x_n) - \mu_{y_j}\Bigl|>\delta\Bigl)\to 0 \text{ as }N\to \infty. \end{split}\end{equation}

Since $\mu = \sup_{y\in \partial K^{\circ}}\mu_y$, we conclude that for any $\delta>0$, if $N$ is chosen sufficiently large then there exist $x_1,\dots, x_N\in \partial K$ such that for any $\eps_n\in \{-1,1\}$,
\begin{equation}\label{Fsupp}
\Bigl\|\frac{1}{N}\sum_{n=1}^Ng(x_n)\eps_n\nu_n\Bigl\|_K\leq \mu+3\delta,
\end{equation}
and so if $\alpha = \frac{1}{\mu+3\delta}$, then $\supp(\widehat{f})\subset K.$  Repeating the Markov's inequality argument if necessary, we may additionally ensure that
\begin{equation}\begin{split}\nonumber\frac{1}{N}\sum_{n=1}^N g(x_n)\geq \frac{1}{\Haus^{d-1}(\partial K)}\int_{\partial K}g(x)d\Haus^{d-1}(x)-\delta = \langle g\rangle -\delta.
\end{split}\end{equation}

On the other hand, each factor
$f_n(x) = \cos\bigl(2\pi \frac{\alpha}{N}g(x_n)\langle x,\nu_n\rangle\bigl)$
satisfies
$$\lim_{R\to \infty}\frac{\mathcal{H}^{d-1}(\{f_n=0\}\cap B(0,R))}{\omega_dR^d} = \frac{2\alpha g(x_n)}{N}.$$
(See Lemma 5.1 of \cite{JM}.)  Since there are $N$ factors $f_n$, and the nodal sets of each $f_n$ intersect in a set of dimension $d-2$, we have that,
\begin{equation}\begin{split}\nonumber\lim_{R\to \infty}&\frac{\mathcal{H}^{d-1}(\{f=0\}\cap B(0,R))}{\omega_dR^d} = 2\alpha \frac{1}{N}\sum_{n=1}^N g(x_n)\geq \frac{2(\langle g \rangle -\delta)}{(\mu+3\delta)}.\end{split}\end{equation}

We cannot immediately conclude Lemma \ref{genexample} as the set $\{f=0\}$ is not $\varphi$-regular for a function $\varphi$ with $\lim_{t\to 0^+}\varphi(t)=1$.  However by removing small regions where any of the planes in the sets $\{f_n=0\}$ intersect, we obtain a  set  $\Gamma$ that is $\varphi$-regular for some $\varphi$ with $\lim_{t\to 0^+}\varphi(t)=1$, and such that $\Gamma$ has surface density at least $2\frac{\langle g\rangle}{\mu}-C'\delta$, where $C'$ is an absolute constant, and $\{f= 0\}\supset\Gamma$.  This concludes the proof of Proposition \ref{genexample}.\end{proof}

We first use Proposition~\ref{genexample} to prove Theorem~\ref{sharp}.

\begin{proof}[Proof of Theorem \ref{sharp}]
Take $g\equiv 1$ and recall the following well-known computation (see, for instance the end of Section 5 of \cite{JM})
$$ \int_{\Sph^{d-1}}|\langle \theta,v\rangle|d\mathcal{H}^{d-1}(\theta) = 2\omega_{d-1}.$$
Since $\mathbf{W}(B(0,1))=2$, the result immediately follows from Proposition~\ref{genexample}.
\end{proof}

\subsection{Sharpness for any $\pi/2$-symmetric convex body if $d=2$}

We now prove Theorem~\ref{sharp2} as a consequence of Proposition~\ref{genexample}.  Recall that a convex body is called $\pi/2$-symmetric if it is symmetric under a rotation by $\pi/2$, i.e.
$$(x_1,x_2)\in K\iff (-x_2, x_1)\in K$$
This condition implies origin symmetry, and  that $h_K(\theta_1,\theta_2) = h_K(-\theta_2, \theta_1)$ for $(\theta_1,\theta_2)\in \Sph^1$.  

By an approximation argument, in proving Theorem \ref{sharp2}, we may assume that $K$ is strictly convex.  We again will set $g\equiv 1$ in the statement of Proposition \ref{genexample}, and calculate, for $\theta\in \partial K^{\circ}$
$$\frac{1}{\Haus^1(\partial K)}\int_{\partial K}|\langle \nu_x, \theta\rangle| d\Haus^1(x) = \frac{2}{\Haus^1(\partial K)}\int_{(\partial K)_+}\langle \nu_x, \theta\rangle d\Haus^1(x)$$
where $(\partial K)_+ = \{x\in \partial K: \langle \nu_x,\theta\rangle \geq 0\}$.  

Denote by $z_{\pm}\in \partial K$ the two points that satisfy $\nu_{z_{\pm}}\perp \theta$, and put $\theta^{\perp} = (-\theta_2, \theta_1)$.  Since $K$ is origin symmetric, $z_{-} = -z_{+}$, so the line segment $[z_{-}, z_{+}]\subset K$ contains $0$, and has length $2|\nabla h_K(\tfrac{\theta^{\perp}}{|\theta|})|.$  Put $\nu$ to be a unit vector normal to the direction of the line segment $[z_-,z_+]$.  Then the divergence theorem implies that 
$$\int_{(\partial K)_+}\langle \nu_x, \theta\rangle d\Haus^1(x) = \int_{[z_-,z_+]}|\langle \nu, \theta\rangle|d\Haus^1 = 2|\nabla h_K(\tfrac{\theta^{\perp}}{|\theta|})||\langle \nu, \theta\rangle|.
$$
It is an elementary geometry exercise to see that
$$|\nabla h_K(\tfrac{\theta^{\perp}}{|\theta|})||\langle \nu, \theta\rangle| = h_K(\tfrac{\theta^{\perp}}{|\theta|})|\theta|.$$
Indeed, this boils down to the following fact: For $A>B>0$ denote by $T$ the right angle triangle with vertices $(0,0)$, $(0,B)$ and $(\sqrt{A^2-B^2},0)$, then the outward unit vector to the hypotenuse of $T$ has its first component equal $B/A$.

Since
$$h_K(\tfrac{\theta^{\perp}}{|\theta|})|\theta| = h_K(\theta^{\perp})$$
we have that the quantity $\mu$ appearing in Proposition \ref{genexample} is
$$\mu = \sup_{\theta\in \partial K^{\circ}}\frac{4h_K(\theta^{\perp})}{\Haus^1(\partial K)}.
$$
Combined with the classical fact that for two-dimensional convex body $K$,
$$\Haus^1(\partial K ) = \pi \mathbf{W}(K),
$$
Proposition \ref{genexample} ensures that for any $\eps>0$ there is a function $f\in \mathcal{P}\mathcal{W}_{\infty}(K)$, that vanishes on a regular set of density at least $$\frac{\pi}{2}\frac{\mathbf{W}(K)}{\sup_{\theta\in \partial K^{\circ}}h_K(\theta^{\perp})}-\eps.$$

To this point we have not used the $\tfrac{\pi}{2}$-symmetric assumption, but finally we observe that  if $K$ is $\tfrac{\pi}{2}$-symmetric then $\sup_{\theta\in \partial K^{\circ}}h_K(\theta^{\perp})=1$ (insofar as it implies that $h_K(\theta)=h_K(\theta^{\perp})$), and this completes the proof.

\medskip

\vspace{0.3cm}

Benjamin Jaye,
School of Mathematics,
Georgia Institute of Technology,
Atlanta, GA USA 30332

\smallskip
{\it E-mail}: \texttt{bjaye3@gatech.edu}
\vspace{0.3cm}

Mishko Mitkovski
School of Mathematical and Statistical Sciences,
Clemson University,
Clemson, SC 29634

\smallskip
{\it E-mail}: \texttt{mmitkov@clemson.edu}

\vspace{0.3cm}
Manasa N. Vempati,
Department of Mathematics,
Louisiana State University,
Baton Rouge, LA 70803-4918, USA.

\smallskip
{\it E-mail}: \texttt{nvempati@lsu.edu}


\begin{thebibliography}{99}


\bibitem[AGR]{AGR} B. Adcock, M. Gataric, and J. L. Romero. \emph{Computing reconstructions from nonuniform Fourier samples: Universality of stability barriers and stable sampling rates.} Appl. Comput. Harmon. Anal., 2017.

\bibitem[AGH+]{AGH+} A. Aldroubi, K. Gr\"{o}chenig, L. Huang, P. Jaming, I Krishtal and  J.-L. Romero, \emph{Sampling the Flow of a Bandlimited Function.} J Geom Anal (2021).


\bibitem[BB]{BB} B. Berndtsson, \emph{Zeros of analytic functions of several variable}, Arkiv för Matematik, \textbf{16}, (1978), 251--262.

\bibitem[Beu]{Beu} A. Beurling, \emph{A Balayage of Fourier-Stiltjes transforms}.  In: The collected works of Arne Beurling. Vol. 2.
Harmonic analysis. Edited by L. Carleson, P. Malliavin, J. Neuberger and J. Wermer. Contemporary Mathematicians. Birkh\"{a}use, Boston, 1989.

\bibitem[F]{F} H. Federer,  \emph{Geometric measure theory.} Die Grundlehren der mathematischen Wissenschaften, Band 153 Springer-Verlag New York Inc., New York 1969.

\bibitem[GRUV]{GRUV} K. Gr\"{o}chenig, J.-L. Romero, J. Unnikrishnan, and M. Vetterli, \emph{On minimal trajectories for mobile sampling of bandlimited fields.} Appl. Comput. Harmon. Anal. 39 (2015), no. 3, 487--510. 



\bibitem[JNR]{JNR}  P Jaming, F. Negreira, and J.-L. Romero, \emph{The Nyquist sampling rate for spiraling curves,} to appear in  Applied and Computational Harmonic Analysis. arXiv:1811.01771. 


\bibitem[JM]{JM} B.Jaye and M. Mitkovski, \emph{A sufficient condition for mobile sampling in terms of surface density}, Applied and Computational Harmonic Analysis, \textbf{61}, (2022), 57--74.



\bibitem[LR]{LR} L.I. Ronkin, \emph{Functions of Completely Regular Growth},  Mathematics and its Applications, \textbf{81}.

\bibitem[LS]{LS} V. N. Logvinenko and J. F. Sereda, \emph{Equivalent norms in spaces of entire functions of exponential type},
   Teor. Funkcii Funkcional. Anal. i Prilozen., \textbf{20}, (1974),  102--111

\bibitem[Kah]{Kah} J.-P. Kahane, \emph{Sur les fonctions moyenne-p\'{e}riodiques bornées.} Ann. Inst. Fourier (Grenoble) \textbf{7} (1957), 293--314.

\bibitem[MS]{MS} C. Muscalu and W. Schlag, \emph{Classical and Multilinear Harmonic Analysis. Vol. 1}, Cambridge Studies in Advanced Mathematics,
  \textbf{137}, Cambridge University Press, 2013.


\bibitem[OU]{OU} A. Olevskii and A. Ulanovskii, \emph{Functions with disconnected spectrum. Sampling, interpolation, translates.} University Lecture Series, 65. American Mathematical Society, Providence, RI, 2016.



\bibitem[RUZ]{RUZ} A. Rashkovskii, A. Ulanovskii, and I. Zlotnikov, \emph{On 2-dimensional mobile sampling},  Applied and Computational Harmonic Analysis, \textbf{62}, (2023), 1--23. 

\bibitem[UV12]{UV12} J. Unnikrishnan, and M. Vetterli, \emph{Sampling high-dimensional bandlimited fields on low-dimensional manifolds},
IEEE Transactions on Information Theory, \textbf{59} (2012), no. 4, 2013--2127.

\bibitem[UV13]{UV13} J. Unnikrishnan, and M. Vetterli, \emph{Sampling and reconstruction of spatial fields using mobile sensors},
IEEE Transactions on Signal Processing, \textbf{61} (2013), no. 9, 2328--2340.

\end{thebibliography}
\end{document}